\newtheorem{definition}{Definition}
\newtheorem{theorem}{Theorem}
\title{\large Control design for partial stabilization of nonlinear mechanical systems with random disturbances}
\author{Alexander Zuyev\thanks{
Institute of Applied Mathematics and Mechanics, National Academy of Sciences of Ukraine, Slovyansk ({\tt\small  irisna.shurko@gmail.com})
        \newline
$^{**}$Max Planck Institute for Dynamics of Complex Technical Systems, Magdeburg, Germany
({\tt\small zuyev@mpi-magdeburg.mpg.de})\newline
} $^{,**}$ \;and Iryna Vasylieva$^{*}$}
\date{}
\begin{document}

\maketitle
\thispagestyle{empty}

\begin{abstract}
The problem of partial stabilization for nonlinear control systems described by the Ito stochastic differential equations is considered. For these systems, we propose a constructive control design method which leads to establishing the asymptotic stability in probability of the trivial solution of the closed-loop system with respect to a part of state variables.
Mechanical examples are presented to illustrate the efficiency of the obtained controllers.
\end{abstract}

\section{Introduction}
To construct adequate mathematical models that describe the behavior of real dynamic processes and analyze their stability properties, it is necessary to take into account the effects of uncertainties and random disturbances.
The latter leads to the need to study systems of differential equations with random perturbations.
Here, qualitative methods for investigating the asymptotic behavior of solutions of systems of differential equations with random disturbances are useful.
Lyapunov methods for analyzing the stability of stochastic systems have been developed by many authors (see, e.g.,~\cite{Kush_67,Hasm_81} and references therein). In particular, the concept of control Lyapunov functions and Artstein's theorem \cite{Artstein} have been extended to stochastic differential equations in~\cite{Florch:95}. In \cite{Yua:19}, a criterion for stochastic finite-time stability via multiple Lyapunov functions has been obtained.

Partial stabilization problem arises in tasks when only the stability with respect to some variables is needed for a desired performance of the system. This task is also crucial when the system is not stable in the sense of Lyapunov, but asymptotically stable with respect to a part of variables~\cite{RO,CDC2003,UMJ2006,Vor:19}.  Therefore, the problems of partial stability and stabilization of motion are highly important in engineering applications, cf.~\cite{CCA2001,Z2015}. In the paper~\cite{Vor:19}, conditions of partial stability in probability for the Ito stochastic differential equations have been obtained by Lyapunov's direct method. In~\cite{Kao:14},  sufficient conditions for partial stability of stochastic reaction-diffusion systems with Markovian switching have been derived.

In this paper, we consider the problem of stabilization of the Ito-type stochastic differential equations with respect to a part of variables.
Our goal is to propose an efficient control design scheme for the above problem.
To achieve this goal, we present an extension of the universal stabilizing controllers from~\cite{Sontag} to the problem of partial stabilization of stochastic systems in Section~3.
Our main theoretical contribution will be applied to mechanical examples in Sections~4 and~5.

\section{Notations and definitions}

Throughout this paper, let $w(t)\in{\mathbb R}^k$  $(t\ge 0)$ be a standard $k$-dimensional Wiener process defined on a complete probability space $(\Omega,{\mathcal F},P)$, and let $\{{\cal F}_t\}_{t\ge 0}$ be the complete right-continuous filtration generated by $w$.

Consider a control system described by the Ito stochastic differential equations:
\begin{equation}\label{f-1} dx(t) = (f(x) + g(x)u)dt + \sum_{i=1}^k{\sigma_i(x)dw_i(t)},\end{equation}
where $x=(x_1, ..., x_n)^T\in D\subseteq \mathbb{R}^n$ is the state and $u=(u_1, ..., u_k)^T\in U =  \mathbb{R}^k$ is the control.
We assume that $0\in D$, $\sigma_i(0)=0$ for $i=1,...,k$, and
the maps $f:D\to \mathbb R^n$, $g:D\to \mathbb R^{n\times k}$,  $\sigma_i:D\to \mathbb R^n$ satisfy the Lipschitz condition on every bounded domain $X\subset D$.

For a map $h:D\to U$, $h(0)=0$, we introduce the closed-loop system for~\eqref{f-1} with the feedback law $u=h(x)$:
\begin{equation}
dx (t) = (f(x) + g(x)h(x))dt + \sum_{i=1}^k{\sigma_i(x)dw_i(t)}.
\label{closed-loop}
\end{equation}
If $h$ is Lipschitz continuous on every bounded $X\subset D$, then there exists a unique strictly Markov process $x^{\xi ,s}(t)$ which is a solution of~\eqref{closed-loop} under the initial condition $x^{\xi ,s}(s)=\xi$ (see, e.g.,~\cite{Oks_03}).
We relate with the control system~(\ref{f-1}) the operator
\begin{equation*} \mathcal L_{u}=\sum_{i=1}^n (f(x)+g(x)u)_{i}\frac{\partial }{\partial x_{i}}+\frac{1}{2}\sum_{i,j=1}^{n}c_{ij}(x)\frac{\partial^{2}}{\partial x_{i}\partial x_{j}}, \end{equation*}$$[c_{ij}(x)]=\sigma(x)\sigma^{T}(x).$$

In the sequel, we will study stability of the trivial solution of~(\ref{closed-loop}) with respect to the variables $x_1, x_2,...,x_m.$ Denote these variables as $y=(y_1,...,y_m)^T \in \mathbb{R}^m$ and the rest as $z=(z_1,...,z_p)^T\in \mathbb{R}^p,$ $m+p=n$, then $x=(y^T,z^T)^T,$ $x_{0}=(y_0^T,z_0^T)^T$, and $||x||=(x_1^2+...+x_n^2)^{1/2}=(||y||^2+||z||^2)^{1/2}.$

We assume also that the solutions of~(\ref{f-1}) are $z-$extendable in a closed domain
$D=\mathcal D_H$, where
$$\mathcal D_H = \{x\in {\mathbb R}^n\,:\,||y(t)||\le H,\; z\in \mathbb R^p\},\; H=const > 0.
$$
It means that if $x(t) \in \mathcal D_H$ is a maximal solution of system  (\ref{f-1}) on $t \in (\tau _{1}, \tau_ {2})$ with some admissible control $u \in L^{\infty}(\tau _{1}, \tau_ {2})$, then either $||y(t)|| \rightarrow H $ as $t \rightarrow \tau_2$ almost surely or $\tau_2 = \infty.$
This kind of $z$-extendability assumption is natural in the problems of partial stability~\cite{RO}; it is usually satisfied for well-posed mathematical models in physics whose trajectories do not blow up in finite time with bounded control.

Let us introduce the standard class of comparison functions $\mathcal K,$ whose elements are continuous strictly increasing functions $\alpha:\mathbb{R}^+\to\mathbb{R}^+$ such that $\alpha(0)=0.$
We will extend the concept of a control Lyapunov function~\cite{Artstein,Florch:95,Sontag,Z:2000} to the problem of partial stabilization of stochastic systems as follows.

\begin{definition}
A function $V\in C^2({\cal D}_H;{\mathbb R})$ is called a $y$-stochastic control Lyapunov function  ($y$-SCLF) for system (\ref{f-1}), if there exist $\alpha, \beta_1, \beta_2 \in \mathcal {K}$ such that
\begin{equation*}
\beta_1(||y||)\le V(x)\le \beta_2(||y||),\end{equation*} \begin{equation*} \inf_{u\in U} {\mathcal L}_uV(x) \le -\alpha (||y||), \end{equation*} for all $x \in \mathcal D_H.$\end{definition}

Throughout the text, $B(x;\delta)$ denotes the $\delta$-neighborhood of a point $x\in{\mathbb R}^n$.

\begin{definition}
 A function $V\in C^2({\cal D}_H;{\mathbb R})$ satisfies the small control property with respect to $y$  if, for any $\epsilon >0$ and any $x_0 \in M=\{x|y=0\}$, there exists a $\delta >0 $ such that
 $$x \in B( x_0; \delta) \Rightarrow \inf_{||u||< \epsilon} \mathcal L_u V(x) \le -\alpha (||y||).$$
 \end{definition}

\begin{definition}\cite{Vor:19,Sharov,We,Ign}
The solution $x=0$ of system (\ref{closed-loop}) is called $y$-stable in probability if, for all $
s\ge 0, \varepsilon >0, \gamma >0$, there exists a  $\delta >0$ such that $\xi\in B(0;\delta)$ implies
\begin{equation*} P\{\sup_{t\geq s}
||y^{\xi,s}(t)||>\varepsilon\}<\gamma.\end{equation*}\end{definition}

\begin{definition}\cite{We,Ign}
The solution $x= 0$ of system (\ref{closed-loop}) is called asymptotically $y$-stable in probability if it is $y$-stable in probability and
\begin{equation*}P\{\lim_{t\rightarrow \infty}{||y^{\xi,s}(t)||=0\}}=1\end{equation*} for all $\xi \in B(0;\Delta)$ with some constant $\Delta  > 0$.\end{definition}

\section{Main result}
The following result generalizes the constructive proof of Artstein's theorem~\cite{Sontag} for the problem of partial stabilization of stochastic systems.
\begin{theorem}
Let $V\in C^2({\mathcal D}_H;\mathbb R)$ be a $y$-SCLF satisfying the small control property. Then there exists a continuous feedback law $h:{\mathcal D}_H \to \mathbb{R}^k$, $h(0,z)=0$, such that the trivial solution of the corresponding closed-loop system~\eqref{closed-loop} with $u=h(x)$ is  $y$-asymptotically stable in probability. The feedback law $h(x)$ is given as follows:
\footnotesize\begin{equation}\label{f-7}h_i(x)=\begin{cases} 0,\qquad\qquad \qquad\qquad\qquad b=0,\\
-\frac{b_i}{\|b\|^2}(a+(a^2+\|b\|^4)^\frac{1}{2}),\;\;  b\ne 0, 2(a^2+\|b\|^4)^\frac{1}{2} \ge \alpha(\|y\|),\\
-\frac{b_i}{2\|b\|^2}(2a+\alpha(\|y\|)), \qquad \mbox{otherwise},\end{cases}\end{equation}\normalsize
where$$ a(x)=\sum_{i=1}^n f_i(x)\frac{\partial V(x)}{\partial x_i}+\frac{1}{2} \sum_{i,j=1}^{n}c_{ij}(x)\frac{\partial^{2}V(x)}{\partial x_{i}\partial x_{j}},$$
\begin{equation}
b_i(x)=\sum_{j=1}^{n}g_{ij}(x)\frac{\partial V(x)}{\partial x_{j}},\;
b(x)=(b_1(x),...,b_k(x)).
\label{ab_formula}
\end{equation}
\end{theorem}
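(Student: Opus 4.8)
The plan is to exploit the affine dependence of the generator on the control: for any constant $u$ one has $\mathcal{L}_u V(x) = a(x) + b(x)\cdot u$, where $a$ and $b$ are exactly the quantities introduced in~\eqref{ab_formula} (the drift-plus-diffusion term and the control-coupling vector, respectively). With this decomposition the $y$-SCLF condition $\inf_{u}\mathcal{L}_u V \le -\alpha(\|y\|)$ becomes a pointwise statement about $a$ and $b$, and the verification of the feedback~\eqref{f-7} reduces to algebra.

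First I would establish the decay estimate $\mathcal{L}_{h(x)}V(x)\le-\tfrac12\alpha(\|y\|)$ on all of $\mathcal{D}_H$, branch by branch. When $b=0$ the generator is independent of $u$, so $\mathcal{L}_{h}V=a=\inf_u\mathcal{L}_uV\le-\alpha(\|y\|)$. In the second branch, substituting $h$ gives $b\cdot h=-(a+(a^2+\|b\|^4)^{1/2})$, hence $\mathcal{L}_hV=-(a^2+\|b\|^4)^{1/2}\le-\tfrac12\alpha(\|y\|)$ precisely because that branch is selected when $2(a^2+\|b\|^4)^{1/2}\ge\alpha(\|y\|)$. In the third branch the same substitution yields $\mathcal{L}_hV=-\tfrac12\alpha(\|y\|)$ exactly. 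Thus $\tilde\alpha:=\tfrac12\alpha\in\mathcal{K}$ serves as the decay rate, and the two expressions coincide on the interface $2(a^2+\|b\|^4)^{1/2}=\alpha(\|y\|)$, so no jump occurs there.

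The second and more delicate step is continuity of $h$ together with $h(0,z)=0$. On the open set $\{b\neq0\}$ each branch is a composition of continuous maps with nonvanishing denominator, and by the matching just noted $h$ is continuous across the interface; the only issue is at points $x_0$ with $b(x_0)=0$. If $y_0\neq0$, the SCLF inequality forces $a(x_0)\le-\alpha(\|y_0\|)<0$, so the second branch is active nearby with $a<0$, and the elementary bound $a+(a^2+\|b\|^4)^{1/2}\le\|b\|^2$ yields $\|h\|\le\|b\|\to0$. If $y_0=0$, I would first observe that the sandwich $\beta_1(\|y\|)\le V\le\beta_2(\|y\|)$ forces $V\equiv0$, hence $\nabla V\equiv0$, on $M=\{y=0\}$, so $b\equiv0$ and $h\equiv0$ there, giving $h(0,z)=0$. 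Continuity at such $x_0$ is then extracted from the small control property: given $\epsilon>0$ it provides $a(x)+\alpha(\|y\|)\le\epsilon\|b(x)\|$ near $x_0$, and feeding this into~\eqref{f-7} produces $\|h(x)\|\le2\epsilon$ in the second branch and $\|h(x)\|\le\epsilon$ in the third, whence $h(x)\to0$. I expect this small-control estimate to be the main obstacle, as in Sontag's deterministic construction, now complicated by the partial-stability threshold $\alpha(\|y\|)$ and by the additional branch.

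Finally, with a continuous $h$ satisfying $h(0,z)=0$ and a function $V\in C^2$ obeying $\beta_1(\|y\|)\le V\le\beta_2(\|y\|)$ and $\mathcal{L}_hV\le-\tilde\alpha(\|y\|)<0$, I would invoke the stochastic Lyapunov criterion for partial stability in probability via the direct method of~\cite{Vor:19,We,Ign}. Since $\mathcal{L}_hV\le0$, Dynkin's formula makes $V(x^{\xi,s}(t))$ a nonnegative supermartingale, and the supermartingale maximal inequality bounds $P\{\sup_{t\ge s}\|y^{\xi,s}(t)\|>\varepsilon\}\le\beta_2(\|y_\xi\|)/\beta_1(\varepsilon)$, which gives $y$-stability in probability after choosing $\delta$ small. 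The strict estimate $\mathcal{L}_hV\le-\tilde\alpha(\|y\|)$ together with the almost-sure convergence of the nonnegative supermartingale then forces $V(x^{\xi,s}(t))\to0$, hence $\|y^{\xi,s}(t)\|\to0$ almost surely, establishing asymptotic $y$-stability in probability and completing the proof.
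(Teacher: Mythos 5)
Your proposal is correct and follows essentially the same route as the paper: branch-by-branch verification that the universal formula gives $\mathcal{L}_h V\le -\tfrac{1}{2}\alpha(\|y\|)$ (with matching on the interface $2(a^2+\|b\|^4)^{1/2}=\alpha(\|y\|)$), Sontag-type continuity via the small control property (which the paper delegates wholesale to the reference~\cite{Z:2000}), and then Dynkin's formula plus nonnegative-supermartingale convergence for $y$-stability and almost-sure decay of $\|y^{\xi,s}(t)\|$. The only deviations are tightenings rather than a different method: you spell out the continuity argument and the fact that $\nabla V=0$ on $\{y=0\}$ (hence $b=0$ and $h(0,z)=0$), and you replace the paper's Gr\"onwall--Chebyshev digression with the cleaner supermartingale maximal inequality $P\{\sup_{t\ge s}\|y^{\xi,s}(t)\|>\varepsilon\}\le \beta_2(\|\xi_y\|)/\beta_1(\varepsilon)$, which is equivalent to the paper's stopping-time estimate $\beta_1(\varepsilon)P_{\xi,s}\{\tau_\varepsilon<t\}\le V(\xi)$.
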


\begin{proof}
The proof of continuity of $h(x)$ in~\eqref{f-7} goes along the same lines as the proof of Theorem~4 in~\cite{Z:2000}.

Let us evaluate the operator  $\mathcal L_{u}V$ for~(\ref{f-1}) using the feedback law $u=h(x)$:
\begin{equation*}{\mathcal L}_{h}V=\begin{cases} a(x),\qquad\qquad \qquad\qquad b=0,\\
-(a^2(x)+\|b(x)\|^4)^\frac{1}{2}, \; b\ne 0, 2(a^2+\|b\|^4)^\frac{1}{2} \ge \alpha(\|y\|),\\
-\frac{1}{2}\alpha(\|y\|), \qquad \mbox{otherwise.}\end{cases}\end{equation*}
As $V(x)$ is a $y$-stochastic control Lyapunov function, the following inequality holds:
\begin{equation*}{\mathcal L}_{h}V \le -\frac{1}{2}\alpha(||y||)\; \text{for all}\; x\in {\cal D}_H.\end{equation*}

Using Gr\"onwall's inequality, we have:
$$E||y^{\xi,s}(t)||^2\le k_1(t-s)e^{\int_s^t{k_2E||y^{\xi,s}(p)||^2}dp}\le N_1 e^{N_2\delta^2},$$
where  $E$ is the expectation in the probability measure $P_{\xi,s},$ $y^{\xi,s}(t)$ is the $y$-component of the solution $x^{\xi,s}(t)$ of~(\ref{closed-loop}) with the initial data $x^{\xi,s}(s)=\xi.$

Putting $\delta=ln(\frac{\epsilon_2 \epsilon_1^2}{N_1})^{\frac{1}{2N_2}},$ we get
$$P\{sup_{t\ge t_0} ||y(t)||>\epsilon_1\}\le\frac{E||y(t)||^2}{\epsilon_1^2}<\epsilon_2.$$

Let $\tau_{\varepsilon}=\inf\{t:\|y^{\xi,s}(t)\|>\varepsilon\},$ $\tau_{\varepsilon}(t)=\min(\tau_{\varepsilon},t).$

From Dynkin's lemma ~\cite{Oks_03}, it follows that
$$E V(x^{\xi,s}(\tau_{\varepsilon}(t))) - V(\xi) = E \int_{s}^{\tau_\varepsilon(t)} {\mathcal L}_h V(x^{\xi,s}(u))du.$$
 Since ${\mathcal L}_hV (x) \le -\frac{1}{2}\alpha (||y||) $, we will get \begin{equation}\label{f-10}E V(x^{\xi,s}(\tau_{\varepsilon}(t)))\leq V(\xi),\quad t\ge s.\end{equation}

 The above inequality can be rewritten as  $$\int_{\tau_\varepsilon<t} \alpha_{1}(\|y^{\xi,s}(\tau_{\varepsilon})\|)P_{\xi,s}(d\omega)+$$$$+\int_{\tau_\varepsilon\geq t}\alpha_{1}(\|y^{\xi,s}(t)\|)P_{\xi,s}(d\omega)\leq V(\xi).$$
Hence,  $$\alpha_{1}(\varepsilon)P_{\xi,s}\{\tau_{\varepsilon}<t\}\leq
V(\xi).$$

From the last equality, due to the continuity of the function $V(x)$ and the equality  $V(0)=0$, it follows that $$\lim_{\xi\rightarrow
0}P_{\xi,s}\{\tau_\varepsilon<t\}=0.$$ So, the   equilibrium $x=0$ of system (\ref{closed-loop}) is $y$-stable in probability.

  From (\ref{f-10}) it follows that the random process $V(x^{\xi,s}(\tau_{\varepsilon}(t)))$ is a nonnegative supermartingale, and there exists the limit \begin{equation}\label{f-11}\lim_{t\rightarrow \infty}V(x^{\xi,s}(\tau_{\varepsilon}(t)))=\eta\end{equation} with probability~1.

From the set of sample trajectories of the process $x^{\xi,s}(t)$ we take the subset $B$ of sample trajectories such that for any $x_{i}^{\xi,s}(t)$ $(i=1,..., n)$ the following  equality holds: $\tau_{\varepsilon}(t)=t,
t\in\mathbb R^{+}$. Then it follow from the above assumptions that
\begin{equation}\label{f-12}\lim_{\xi_{y} \rightarrow 0}P_{\xi,s}\{B\}=1,\end{equation}
where $\xi^{T}=(\xi_{y}^{T},\xi_{z}^{T}).$

From (\ref{f-11}) and (\ref{f-12}),  we have \begin{equation}\label{f-13}\lim_{t\rightarrow \infty}V(x^{\xi,s}(\tau_{\varepsilon}(t)))=\lim_{t\rightarrow \infty}V(x^{\xi,s}(t))=\eta.\end{equation}

Note that $V(x)$ is a $y$-stochastic control Lyapunov function, so for all trajectories from the set $B$, except a set of probability $0,$ the following property holds: $$ \lim_{t\rightarrow \infty}||y^{\xi,s}(t)||=0.$$

From the assumption of $z-$extendability of solutions and (\ref{f-13}), we obtain $\eta = 0.$

So,  $ \lim_{t\rightarrow \infty}||y^{\xi,s}(t)||=0.$ From this property it follows that the zero solution of the closed-loop system (\ref{closed-loop}) is $y$-asymptotically stable in probability.
\end{proof}

\section{Inverted pendulum with a moving mass}

To illustrate possible applications of Theorem~3.1, we consider a mechanical system consisting of
an inverted pendulum (carrier body) and a point mass $m$ moving in the direction perpendicular to the axis of symmetry of the carrier body  (Fig.~1).
It is assumed that the mass $m$ is suspended by a spring with the stiffness coefficient $\varkappa$.

\begin{figure}[h!]
\center {
\includegraphics[scale=0.27]{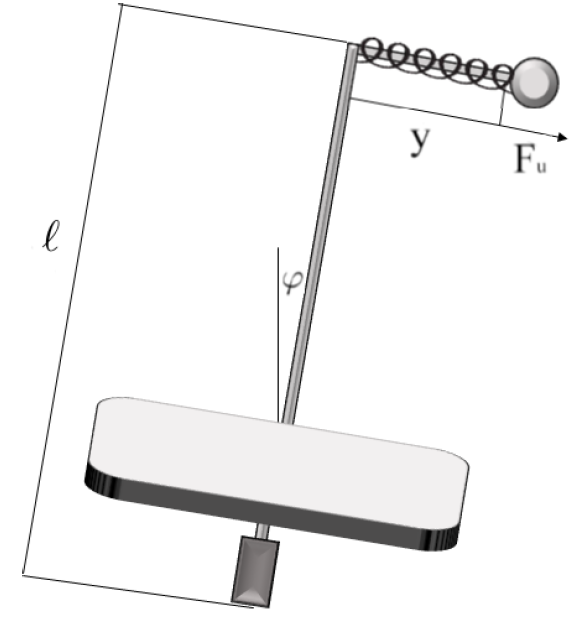}
}
\caption{Inverted pendulum with a moving mass.}

\end{figure}

We will use the following notations: $M$ is the mass of the carrier body, $\varphi$ is the angle between the axis of symmetry of the carrier
and the vertical,
$y$ is the displacement of the point mass,
 and $\ell$ is the distance between the fixed point and the suspension of the mass $m$.

Let us first derive  the equations of motion of this mechanical systems by using the Lagrangian formalism.
The  kinetic energy of the system is $$T=\left(\frac{I}{2}+\frac{m(\ell^2+y^2)}{2}\right){\dot{\varphi}}^2+\frac{m}{2}\dot{y}^2+m\ell\dot{\varphi}\dot{y},$$
where $I$ is the moment of inertia of the carrier body with respect to its fixed point.
The potential energy is $$U=\frac{M\ell g}{2}\cos\varphi+\frac{\varkappa}{2}y^2+mg(\ell \cos\varphi-y\sin\varphi).$$

Then the Lagrangian of the considered system takes the form
$$L=T-U=\left(\frac{I}{2}+\frac{m({\ell}^2+y^2)}{2}\right){\dot{\varphi}}^2+\frac{m}{2}\dot{y}^2+m{\ell}\dot{\varphi}\dot{y}-$$
$$-\frac{\varkappa}{2}y^2-\left(\frac{M}{2}+m\right){\ell}g\cos\varphi+mgy\sin\varphi.$$

We now apply Lagrange's equations in the form
\begin{equation*}
\begin{array}{lcl}
\frac{d}{dt}\left(\frac{\partial L}{\partial \dot{\varphi}}\right)-\frac{\partial L}{\partial \varphi}=0,\\
\frac{d}{dt}\left(\frac{\partial L}{\partial \dot{y}}\right)-\frac{\partial L}{\partial y}=F_u,
\end{array}
\end{equation*}
where $F_u$ is the control force applied to the mass $m$.

This leads to the following equations of motion:
\begin{equation*}
\begin{array}{lcl}
\ddot{\varphi}=\frac{1}{I+my^2}(-2my\dot{\varphi}\dot{y}-m{\ell}y{\dot{\varphi}}^2+\varkappa {\ell}y+\frac{M{\ell}g}{2}\sin \varphi+\\ \quad\quad\quad\quad\quad\quad\quad\quad +mgy\cos \varphi - {\ell}F_u),\\
\ddot{y}=\frac{{\ell}}{I+my^2}(2my\dot{\varphi}\dot{y}+m{\ell}y{\dot{\varphi}}^2-\varkappa {\ell}y-\frac{M{\ell}g}{2}\sin \varphi+\\ + {\ell}F_u - m g y\cos \varphi)+\frac{I}{I+my^2}(y{\dot{\varphi}}^2-\frac{\varkappa y}{m}+g \sin \varphi)+\\+\frac{1}{I+my^2}((\frac{I}{m}+y^2)F_u+my^3{\dot{\varphi}}^2+y^3 \varkappa + y^2 m g \sin \varphi).
\end{array}
\end{equation*}

By replacing $$v=\frac{1}{I+my^2}(-2my\dot{\varphi}\dot{y}-m{\ell}y{\dot{\varphi}}^2+\varkappa {\ell}y+\frac{M{\ell}g}{2}\sin \varphi +mgy\cos \varphi -{\ell}F_u),$$ we obtain the following equations with respect to the new control $v$:
\begin{equation*}
\begin{array}{lcl}
\ddot{\varphi}=v,\\
\ddot{y}=-({\ell} +\frac{I+my^2}{m \ell} )v+\frac{1}{I+my^2}(2y^3 \varkappa +\frac{2m+M}{2m}(\frac{I}{m}+\\ \quad\quad\quad\quad\quad\quad\quad\quad + y^2)g \sin \varphi)-\frac{2y\dot{y}\dot{\varphi}}{\ell} +\frac{g y \cos \varphi}{\ell}.
\end{array}
\end{equation*}

Let us rewrite the above equations of motion in the form $\dot{x} = f(x)+g(x)v$,

\begin{equation}\label{f-24}
x=\begin{bmatrix}x_1 \\ x_2 \\ x_3 \\ x_4 \end{bmatrix}=\begin{bmatrix}
       \varphi\\
       y\\
       \dot{\varphi}\\
       \dot{y}
     \end{bmatrix},
     f(x)=
     \begin{bmatrix}
     x_3\\
       x_4 \\
       0\\
       q(x)
     \end{bmatrix},
     g(x)=
     \begin{bmatrix}
       0\\
       0\\
       1\\
       -{\ell}-\frac{I+my^2}{m \ell}
     \end{bmatrix},\end{equation}
    $$ q(x)=\frac{1}{I+m x_2^2}\left (2x_2^3 \varkappa +\frac{2m+M}{2m}\left (\frac{I}{m}+ x_2^2 \right )g \sin x_1\right )-\frac{2 x_2 x_3 x_4 }{\ell} +\frac{g x_2 \cos x_1}{\ell}.$$

It is easy to see that system~\eqref{f-24} admits the equilibrium $x=0$ with $v=0$ (upper equilibrium).
We will consider the stabilization of the upper equilibrium of the carrier body in the sense of partial stabilization problem with respect to the variables $(x_1,x_3)$ by applying control to the point mass.

    To take into account random effects, we substitute the stochastic input  $v=u+\lambda x_3 \dot{w}(t)$ formally into system~\eqref{f-24},
    where $w(t)$ is a standard one-dimensional Wiener process.
    As a result, we obtain the following system of stochastic differential equations:
\begin{equation}\label{f-25}
\begin{array}{lcl}
d x_1= x_3 dt,\\
d x_2= x_4 dt,\\
d x_3= u dt+\lambda x_3 d{w}(t),\\
d x_4=\left((-{\ell}-\frac{I+my^2}{m \ell})u+q(x)\right)dt-({\ell}+\frac{I+my^2}{m \ell})\lambda x_3d{w}(t),
\end{array}
\end{equation}
where $u$ is treated as the control.

Since our goal is to steer the variables $\varphi$ and $\dot{\varphi}$ (i.e. $x_1$ and $x_3$) to zero, we propose the following quadratic Lyapunov function candidate:
$$2V(x)=(k_1^2+k_2^2+k_2)x_3^2+2k_1 x_1 x_3+(k_2+1)^2x_1^2,$$
where $k_1$ and $k_2$ are positive constants.

Let us define the functions $a(x)$ and $b(x)$ according to~\eqref{ab_formula}:
$$ a(x)=\sum_{i=1}^4 f_i(x)\frac{\partial V(x)}{\partial x_i}+\frac{1}{2} \sum_{i,j=1}^{4}c_{ij}(x)\frac{\partial^{2}V(x)}{\partial x_{i}\partial x_{j}}=
$$
$$
=(k_1 x_3+(k_2+1)x_1)x_3+(k_2^2+k_1^2+k_2){\lambda}^2 x_3^2,
$$
$$b(x) = (k_2^2+k_1^2+k_2)x_3+k_1 x_1.$$

According to Theorem~3.1, we propose the feedback control law for system (\ref{f-25}) in the form~\eqref{f-7} with $\alpha(\|y\|)= \gamma \|y\|^2$, $\|y\|^2= x_1^2+ x_3^2$, $\gamma>0$.
So, the equilibrium $x=0$ of the corresponding closed-loop system (\ref{f-25}), (\ref{f-7}) is asymptotically stable in probability with respect to $(x_1, x_3)$ by Theorem~3.1.
Simulation results for the closed-loop system (\ref{f-25}), (\ref{f-7}) with $k_1=2,$ $k_2=1$  are presented in Fig.~2.
These simulations have been performed in Maple by using the $ItoProcess(\cdot)$ function.

\begin{figure}[h!]
\center {
\includegraphics[scale=0.3]{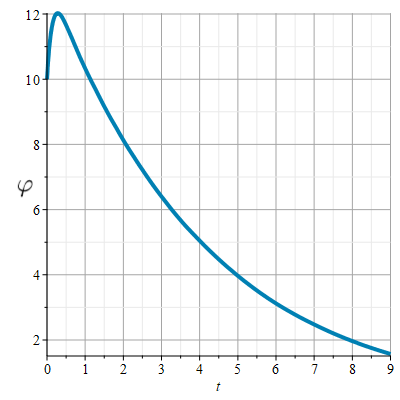}
\includegraphics[scale=0.3]{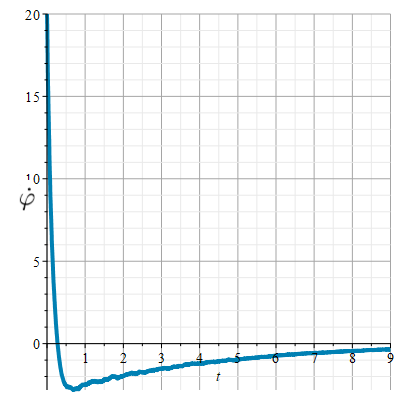}

}
\caption{Components $x_1$ and $x_3$ of a sample path of the closed-loop system~(\ref{f-25}), (\ref{f-7}).}

\end{figure}

\section{Stabilization of a three-wheeled trolley by a stochastic feedback law}

\begin{figure}[h!]
\center {
\includegraphics[scale=0.4]{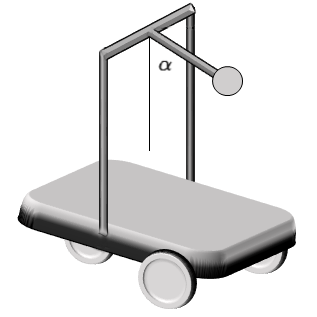}
}
\caption{Three-wheeled trolley.}

\end{figure}

Consider a mathematical model of the three-wheeled trolley whose position is determined by three coordinates: $(x_1, x_2 )$ are coordinates of the midpoint between the steering wheels, and $x_3 $ is the angle between the axis of symmetry of the trolley and the $x_1$-axis, cf.~\cite{K:2006}.
 A cylindrical hinge whose axis is perpendicular to the axis of symmetry of the trolley is mounted above the point $(x_1,x_2)$   (Fig.~3). A weightless and inextensible rod can rotate in this hinge, and a point mass is attached to the other end of the rod. We denote the angle between the vertical axis and the rod by $\alpha$. The motion of the trolley is described by the rolling without slipping conditions:
\begin{equation}\label{f-14}
\begin{array}{lcl}
dx_{1}=(u_1+u_2)\cos x_3dt,\\
dx_{2}=(u_1+u_2)\sin x_3dt,\\
dx_{3}=(u_1-u_2)dt,
\end{array}
\end{equation}
where the vector $u=(u_1,u_2)^T \in \mathbb{R}^2$ is treated as the control.

Following~\cite{K:2006}, we also write Lagrange's equation with respect to the angle $\alpha$:
\begin{equation}\label{f-15}
\ddot{\alpha}-(\dot{x_1}\cos x_3 + \dot{x_2}\sin x_3 + \dot{x_3}\sin \alpha)\dot{x_3} \cos \alpha =-\sin \alpha.
\end{equation}

Note that the considered model belongs to the class of nonholonomic systems which, as it is well-known, cannot be stabilized in a neighborhood of the equilibrium position by a deterministic continuous state feedback law (see, e.g.,~\cite{Bloch}).
In the sequel, we will study the stabilization problem with respect to a part of variables in the stochastic sense.

Let us denote the relative angular velocity of the rod by $ \omega = \dot{\alpha}$ and perform the following change of variables in~(\ref{f-14}), (\ref{f-15}):
\begin{equation*}
\begin{array}{lcl}
z_{1}:= x_3,\\
z_{2}:=x_{1}\cos x_3+x_2\sin x_3,\\
z_{3}:=x_{1}\sin x_3-x_2\cos x_3,\\
z_{4}:=\alpha,\\
z_{5}:=\omega,\\
\nu_{1}:=u_1-u_2,\\
\nu_{2}:=(u_1+u_2)-(u_1-u_2)z_{3}.
\end{array}
\end{equation*}

Then the equations of motion take the form:
\begin{equation}\label{f-17}
\begin{array}{lcl}
\dot{z_{1}}=\nu_1,\\
\dot{z_{2}}=\nu_2,\\
\dot{z_{3}}=\nu_1 z_2,\\
\dot{z_{4}}=z_5,\\
\dot{z_{5}}=(\nu_2+\nu_1 z_3+\nu_1 \sin z_4)\nu_1 \cos z_4-\sin z_4.
\end{array}
\end{equation}

We randomize system~(\ref{f-17})  by designing the control inputs
$$\begin{array}{lcl}
\nu_1=v_1,\\
\nu_2=v_2+\lambda z_{2}\dot w(t),
\end{array}$$
where $\dot w(t)$ is treated formally as the derivative of a standard one-dimensional Wiener process $w(t)$.
Then we rewrite the stochastic control system as follows:
\begin{equation}\label{f-18}
\begin{array}{lcl}
d{z_{1}}=v_1dt,\\
d{z_{2}}=v_2dt+\lambda z_{2}d w(t),\\
d{z_{3}}=v_1 z_2dt,\\
d{z_{4}}=z_5dt,\\
d{z_{5}}=\left((v_2+v_1 z_3+v_1 \sin z_4)v_1 \cos z_4-\sin z_4\right)dt+\\\quad  \quad +\lambda z_{2}v_1 \cos z_4d w(t).
\end{array}
\end{equation}

We consider the partial stabilization problem for  system (\ref{f-18})  with respect to the variables $z_1, z_2, z_3.$

To design  stabilizing controls $v_1,v_2,$ we take a control Lyapunov function candidate of the following form~\cite{N:2013}:
$$V(z)=2z_3-\frac{1}{2}(z_{1}^2+z_{2}^2)(1+z_{3}^2)+2\left(\frac{|z_{1}^2+z_{2}^2|}{2}\right)^{1+\frac{z_{3}^2}{2}}.$$

Then we define the functions $a(z),b_1(z),b_2(z)$ according to~\eqref{ab_formula}:
$$a(z) =\frac{1}{2} \sum_{i,j=1}^{5}c_{ij}(z)\frac{\partial^{2}V(z)}{\partial z_{i}\partial z_{j}}= \frac{1}{2}{\lambda}^2 z_{2}^2\frac{\partial^{2}V(z)}{{\partial z_{2}}^2},$$
$$b_1(z) = -z_1(z_{3}^2+1)+\frac{4\left(\frac{|z_{1}^2+z_{2}^2|}{2}\right)^{1+\frac{z_{3}^2}{2}}(1+\frac{z_{3}^2}{2})z_1}{|z_{1}^2+z_{2}^2|}+z_2\left(2-(z_{1}^2+z_{2}^2)z_3+\right.$$
$$+ z_2\left(2-(z_{1}^2+z_{2}^2)z_3+2\left(\frac{|z_{1}^2+z_{2}^2|}{2}\right)^{1+\frac{z_{3}^2}{2}}z_3 \ln\left(\frac{|z_{1}^2+z_{2}^2|}{2}\right)\right),$$
$$b_2(z) = -z_2(z_{3}^2+1)+\frac{4\left(\frac{|z_{1}^2+z_{2}^2|}{2}\right)^{1+\frac{z_{3}^2}{2}}(1+\frac{z_{3}^2}{2})z_2}{|z_{1}^2+z_{2}^2|},$$
$$b(z)=(b_1(z),b_2(z)).$$

Thus, the conditions of  Theorem~3.1 are satisfied with the above choice of $a(x)$, $b(x)$, and $\alpha(\|y\|) =\gamma\|y\|^2$, $\|y\|^2= z_1^2+z_2^2+z_3^2$, $\gamma>0$.
Numerical simulation results for system (\ref{f-18}) with the feedback law (\ref{f-7}) are presented in Figs.~4-5.

\begin{figure}[h!]
\center {
\includegraphics[scale=0.3]{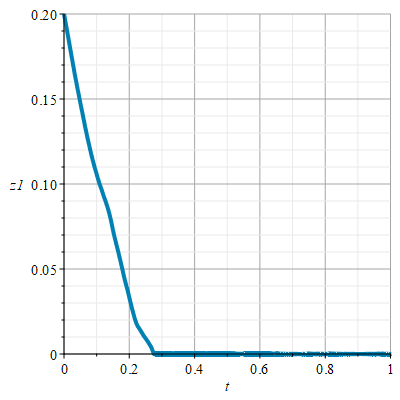}
\includegraphics[scale=0.3]{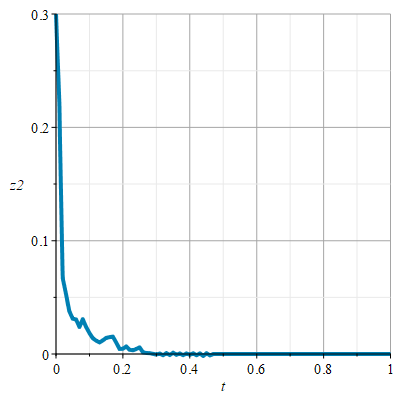}
\includegraphics[scale=0.3]{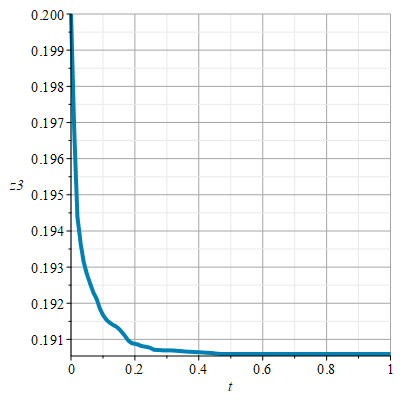}
}
\caption{Components $z_1,z_2,z_3$ of a sample path of the closed-loop system~(\ref{f-18}), (\ref{f-7}).}

\end{figure}

\begin{figure}[h!]
\center {
\includegraphics[scale=0.3]{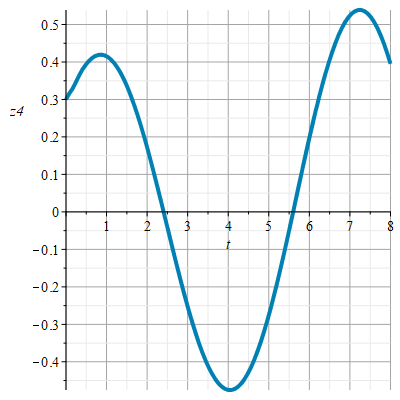}
\includegraphics[scale=0.3]{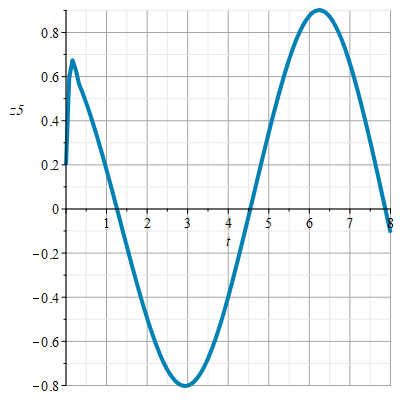}
}
\caption{Components $z_4,z_5$ of a sample path of the closed-loop system~(\ref{f-18}), (\ref{f-7}).}

\end{figure}

\section{Conclusion}

 A constructive proof of Artstein's theorem has been extended to the problem of partial stabilization of the Ito stochastic differential equations.
 This construction allows effective computing of stabilizing feedback controls if a control Lyapunov function in the sense of Definitions 2.1-2.2 is known.
 The control design scheme of Theorem~3.1 is shown to be applicable to nonlinear systems with stochastic effects that describe the dynamics of an inverted pendulum with a moving masses and a three-wheeled trolley with an additional degree of freedom.
The simulation results, presented in Figs. 2 and 4-5, illustrate the required behavior of sampled paths of the corresponding closed-loop systems.

\end{document}